\numberwithin{equation}{section}
\newcounter{cnstcnt}
\theoremstyle{plain}
\newtheorem{Th}{Theorem}[section]
\newtheorem{Lemma}[Th]{Lemma}
\newtheorem{Prop}[Th]{Proposition}
 \theoremstyle{definition}
\newtheorem{?}[Th]{Problem}
\title{the Miyaoka-Yau Inequality \\ on smooth minimal models}
\author{Wanxing Liu}
\begin{document}
\maketitle

\begin{abstract}
      In this short note, we offer an observation that the Miyaoka-Yau inequality holds for any compact K\"{a}hler manifold with nef canonical bundle, i.e. a smooth minimal model. It follows directly from the existence of cscK metrics in a neighborhood of the canonical class which was confirmed both by the work of Dyrefelt and Song using different approaches.
\end{abstract}

\section{Introduction}
Let $(M, \omega)$ be a compact K\"{a}hler manifold of dimension $n \geq 2$ with $c_1(K_{M}) = -c_1(M)$ nef, we will prove the following theorem.
\begin{Th} \label{main}
The Miyaoka-Yau inequality
\begin{equation} \label{eq:MY}
   \tag{MY}
    (2(n+1)c_2(M) - n c_1(M)^2) \cdot (-c_1(M))^{n-2} \geq 0
\end{equation}
holds on all compact K\"{a}hler manifold with nef canonical bundle.
\end{Th}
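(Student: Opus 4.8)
The plan is to obtain the inequality at the boundary class $-c_1(M)$ as a limit of the corresponding inequalities for nearby genuine K\"ahler classes, where a canonical metric is available. Fix a K\"ahler class $\beta$. Since $K_M$ is nef, $-c_1(M)$ lies in the closure of the K\"ahler cone, so $\alpha_t := -c_1(M) + t\beta$ is a K\"ahler class for every $t>0$; by the cited existence of cscK metrics in a neighbourhood of the canonical class, for all small $t>0$ there is a cscK metric $\omega_t$ with $[\omega_t]=\alpha_t$. Writing $\mathcal Q := \bigl(2(n+1)c_2(M)-nc_1(M)^2\bigr)\cdot(-c_1(M))^{n-2}$, the number $\mathcal Q$ is the value at $t=0$ of the polynomial $t\mapsto \bigl(2(n+1)c_2-nc_1^2\bigr)\cdot\alpha_t^{\,n-2}$, so it suffices to produce a lower bound for the latter that survives the limit $t\to 0^+$.

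First I would record the pointwise Chern--Weil identity underlying the classical inequality. For any K\"ahler metric $\omega$, decomposing the curvature tensor into its orthogonal irreducible components -- the Bochner tensor $R_B$, the trace-free Ricci part $\mathring{\mathrm{Ric}}$, and the scalar part -- and using that the integrands of $c_1^2$ and $c_2$ are universal quadratic curvature invariants, one obtains
\[ \bigl(2(n+1)c_2-nc_1^2\bigr)\cdot[\omega]^{n-2} = A\int_M|R_B|^2\,\frac{\omega^n}{n!} + B\int_M|\mathring{\mathrm{Ric}}|^2\,\frac{\omega^n}{n!} \]
with a universal constant $A>0$; the scalar part and all cross terms drop out, as is forced by testing on complex space forms, where both sides vanish. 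For a K\"ahler--Einstein metric the trace-free Ricci term is absent and this is exactly Yau's inequality. The difficulty in the present generality is that a cscK metric need not be Einstein, so this second term is genuinely present, and it enters with the unfavourable sign $B<0$; in particular pairing against the metric's own class cannot succeed, which is what forces the limiting argument.

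The decisive observation is that for a cscK metric the offending term is topological. From the identity $\int_M(s^2-|\mathrm{Ric}|^2)\frac{\omega^n}{n!}=\frac{(2\pi)^2}{(n-2)!}\,c_1^2\cdot[\omega]^{n-2}$ and the average value $\hat s = 2\pi n\,(c_1\cdot[\omega]^{n-1})/[\omega]^n$, together with $|\mathring{\mathrm{Ric}}|^2=|\mathrm{Ric}|^2-\tfrac{s^2}{n}$ and the constancy of $s$, one computes
\[ \int_M|\mathring{\mathrm{Ric}}|^2\,\frac{\omega^n}{n!} = \frac{(2\pi)^2}{(n-2)!}\left(\frac{(c_1\cdot[\omega]^{n-1})^2}{[\omega]^n}-c_1^2\cdot[\omega]^{n-2}\right), \]
the parenthesis being the Hodge-index defect of $c_1(M)$ against $[\omega]$ (automatically nonnegative, as it equals $\tfrac{(n-2)!}{(2\pi)^2}\int_M|\mathring{\mathrm{Ric}}|^2$). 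Applying both displays to $\omega_t$ and setting $F(t):=A\int_M|R_B(\omega_t)|^2\frac{\omega_t^n}{n!}\ge 0$, I get a function, continuous for $t>0$, with
\[ F(t) = \bigl(2(n+1)c_2-nc_1^2\bigr)\cdot\alpha_t^{\,n-2} - \frac{(2\pi)^2 B}{(n-2)!}\left(\frac{(c_1\cdot\alpha_t^{n-1})^2}{\alpha_t^n}-c_1^2\cdot\alpha_t^{n-2}\right). \]
As $t\to 0^+$ the class $\alpha_t\to -c_1(M)$ becomes proportional to $c_1(M)$, so the defect degenerates to $0$; hence $F(t)\to\mathcal Q$, and since $F\ge 0$ this forces $\mathcal Q\ge 0$.

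The main obstacle is conceptual rather than computational: the classical proof eliminates the trace-free Ricci curvature by using a K\"ahler--Einstein metric, which is unavailable once $K_M$ is merely nef. The substitute is to use only constant scalar curvature, which does not kill that curvature pointwise but does render its integral cohomological and, crucially, makes it vanish as the polarisation tends to the canonical class -- precisely the role played by the cited cscK existence. Two points need care. First, one must verify the identity of the second paragraph with $A>0$ and the vanishing of the scalar contribution. Second, one must check that the defect tends to $0$ along $\alpha_t$ even in the non-big case $(-c_1(M))^n=0$, where both $(c_1\cdot\alpha_t^{n-1})^2$ and $\alpha_t^n$ vanish at $t=0$; a first-order expansion in $t$ shows the ratio is $O(t)$ there, so no division-by-zero obstructs the limit.
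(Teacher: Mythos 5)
Your proposal is correct and follows essentially the same route as the paper: cscK metrics in the classes $-c_1(M)+t\beta$ supplied by the existence theorem, the standard curvature-decomposition identity for $(2(n+1)c_2 - nc_1^2)\cdot[\omega]^{n-2}$, and the observation that constancy of the scalar curvature turns $\int_M|\mathring{\mathrm{Ric}}|^2$ into a cohomological quantity that tends to $0$ as the class approaches $-c_1(M)$ (in both the big case $v=n$ and the collapsing case $v<n$). The only cosmetic differences are that you estimate $\int_M|\mathring{\mathrm{Ric}}(\omega_t)|^2$ directly as a ``Hodge-index defect'' of $c_1$ against $\alpha_t$, whereas the paper bounds it by $\int_M|\mathrm{Ric}(\omega_\varepsilon)+\omega_\varepsilon|^2$ and evaluates the limit via the numerical dimension --- the same expansion in disguise.
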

Historically \eqref{eq:MY} was proved under different assumptions on $c_1(K_{M}) = -c_1(M)$ (more precisely on $K_{M}$), so let us first recall the relevant assumptions. First we define
\begin{equation}
    2 \pi c_1(M) = -[\sqrt{-1} \partial \bar \partial (\mathrm{log} \mathrm{det} g)].
\end{equation}
where $g$ is the metric tensor of $\omega$. Furthermore, let $\theta$ be a representative of $-c_1(M)$ we define:
\begin{enumerate}
    \item $-c_1(M)$ is nef if for any $\sigma > 0$, there exists a smooth $\varphi_{\sigma}$ such that $\theta + \sqrt{-1} \partial \bar \partial \varphi_{\sigma} > - \sigma \omega$.
    \item $-c_1(M)$ is nef and big if $-c_1(M)$ is nef and $(-c_1(M))^{n} = \int_{M} \theta^{n} > 0$.
    \item $-c_1(M)$ is semi-positive if it contains a semi-positive representative.
\end{enumerate}
Equivalently, one can define $-c_1(M)$ to be nef if it lies on the boundary of the K\"{a}hler cone:
\begin{equation*}
    \mathcal{C}_{M} := \{ [\alpha] \in H^{1,1}(M, \mathbb R) | \text{ there exists a K\"{a}hler metric }  \omega \text{ such that } [ \omega] = [\alpha]\},
\end{equation*}
but the previous definition we give reflects the fact that nefness characterizes the positivity of $-c_1(M)$. Obviously, being semi-positive implies that $-c_1(M)$ is nef directly by definition. If $-c_1(M)$ is nef one can easily conclude that $(-c_1(M))^{n} \geq 0$, but it is not necessarily big. If $-c_1(M)$ is big and nef, then $M$ is projective. By Kawamata's base point free theorem $K_{M}$ is semi-ample. It follows that $-c_1(M)$ has a semi-positive representative as some multilple of the pullback of the Fubini Study metric through the canonical map $\Phi : M \to \mathbb P^{N}$. In conclusion, nefness is a very weak condition compared with the other two conditions. However, by a corollary of the the Abundance Conjecture, nefness is expected to imply semi-positivity. Without assuming the Abundance Conjecture, when we are dealing with a nef class which is not big, one major difficulty lies in the absence of a good representative with the right positivity property since in the definition of nefness we have varying representatives for different $\sigma$. We will call a compact K\"{a}hler manifold a smooth minimal model if $-c_1(M)$ is nef and a smooth minimal model of general type if $-c_1(M)$ is nef and big.

On compact K\"{a}hler manifolds with negative first Chern class there exists a unique K\"{a}hler Einstein metric by \cite{yau1978ricci, MR494932}. \eqref{eq:MY} was initially proved by Yau \cite{MR451180} on such manifolds, and by \cite{MR451180, Miyaoka1977} on complex surfaces with big canonical bundle. Furthermore, if the equality in \eqref{eq:MY} holds on a compact K\"{a}hler manifold of negative first Chern class, then the K\"{a}hler Einstein metric is hyperbolic, i.e. its holomorphic sectional curvature is a negative constant. \eqref{eq:MY} in the case of smooth minimal models of general type was eastablished by the work of Tsuji \cite{MR976585} (see also Song-Wang \cite{MR3470713} for some clarifications) and Zhang \cite{MR2497488}. In addition, \cite{1611.05981, MR4061021} confirmed \eqref{eq:MY} for all minimal projective varieties. More recently, Nomura \cite{1802.05425} was able to obtain it under the assumption that $-c_1(M) = c_1(K_{M})$ is semi-positive but not big using K\"{a}hler Ricci flow. \cite{1611.05981} also includes a more thorough account of the historical development of \eqref{eq:MY}. See also Zhang \cite{1803.06093} for Miyaoka-Yau type inequalities on compact K\"{a}hler manifolds of almost nonpositive holomorphic sectional curvature (which implies nefness).

Our goal here is to prove \eqref{eq:MY} with no further assumption except that $-c_1(M)$ is nef. Our approach treats the case where $-c_1(M)$ is big and not big simultaneously. We will show that Theorem \ref{main} is a direct consequence of the existence of the cscK metrics in a neighborhood of the canonical class.

\begin{Th} \label{existence}
Let $(M, \omega_0)$ be a compact K\"{a}hler manifold. If the canonical class $-c_1(M)$ is nef, then for any $\varepsilon > 0$ small enough, there exists a unique cscK (constant scalar curvature K\"{a}hler) metric in the K\"{a}hler class  $- 2\pi c_1(M) + \varepsilon [\omega_0]$. 
\end{Th}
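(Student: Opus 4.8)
The plan is to prove Theorem~\ref{existence} by the variational method: I would reduce the existence of a cscK metric in the class $\alpha_\varepsilon := -2\pi c_1(M) + \varepsilon[\omega_0]$ to the coercivity (properness) of the Mabuchi K-energy $\mathcal{M}$ over the space of K\"ahler potentials of $\alpha_\varepsilon$, and then invoke the a priori estimates of Chen--Cheng together with the correspondence of Berman--Darvas--Lu between coercivity and existence. Uniqueness is handled separately through the convexity of $\mathcal{M}$ along geodesics.

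The decisive elementary input is the sign of the average scalar curvature. Writing $L := -2\pi c_1(M) = 2\pi c_1(K_M)$ for the nef class, a metric in $\alpha_\varepsilon = L + \varepsilon[\omega_0]$ has average scalar curvature
\[
\bar S_\varepsilon \;=\; n\,\frac{2\pi c_1(M)\cdot \alpha_\varepsilon^{\,n-1}}{\alpha_\varepsilon^{\,n}} \;=\; -\,n\,\frac{L\cdot(L+\varepsilon[\omega_0])^{n-1}}{(L+\varepsilon[\omega_0])^{n}} .
\]
Expanding in $\varepsilon$ and using only that $L$ is nef, one finds $\bar S_\varepsilon \to -\nu(L)$ as $\varepsilon \to 0^+$, where $\nu(L)$ is the numerical dimension of $L$; in particular $\bar S_\varepsilon < 0$ for all small $\varepsilon > 0$, the sole degenerate case $L \equiv 0$ being the Calabi--Yau situation where the statement reduces to Yau's theorem. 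This strict negativity is the analytic engine of the argument, playing the role that $c_1(M) < 0$ plays in the Aubin--Yau K\"ahler--Einstein theorem.

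I would then prove that $\mathcal{M}_{\alpha_\varepsilon}$ is coercive. In the entropy/energy decomposition of the K-energy, $\mathcal{M}$ is the sum of the relative entropy $\operatorname{Ent}(\omega_\varphi) \geq 0$ and functionals of Aubin--Mabuchi type that are paired against $\operatorname{Ric}(\omega) \in 2\pi c_1(M)$ and weighted by $\bar S_\varepsilon$; because $\bar S_\varepsilon < 0$ the weighting carries the coercive sign, while the pairing against $c_1(M) = -L/2\pi$ must be controlled using the nefness of $L$. This is exactly where the main obstacle lies: since $L$ is merely nef, and in the non-big case not even semi-positive, there is no smooth closed nonnegative representative of $L$ to pair against, and the approximants $\theta_\sigma$ with $\theta_\sigma > -\sigma\omega_0$ degenerate as $\sigma \to 0$. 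The estimates must therefore be run with the family $\theta_\sigma$ and passed to the limit with constants kept uniform, the sign of $\bar S_\varepsilon$ absorbing the loss. It is here that the two known arguments diverge, one being more variational and stability-theoretic, the other proceeding through a continuity path (or a Ricci-type flow) out of the big case.

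Finally, coercivity of $\mathcal{M}_{\alpha_\varepsilon}$ feeds into the Chen--Cheng a priori estimates for the fourth-order equation $S(\omega_\varphi) = \bar S_\varepsilon$, yielding $C^\infty$ bounds and hence a genuine cscK metric in $\alpha_\varepsilon$. For uniqueness I would invoke the convexity of the Mabuchi functional along weak geodesics (Berman--Berndtsson), which gives uniqueness modulo the action of $\operatorname{Aut}_0(M)$; on a smooth minimal model $\operatorname{Aut}_0(M)$ is a complex torus, in particular compact, so it acts by isometries of the minimizer and the cscK metric is unique as a metric. The whole scheme is elementary apart from the coercivity estimate and the Chen--Cheng regularity theory, which together constitute the real content.
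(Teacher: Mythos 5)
First, a point of comparison: the paper does not prove Theorem \ref{existence} at all --- it is imported as a black box from Dyrefelt \cite{2012.07956} and Song \cite{2004.02832}, so there is no internal proof to measure your proposal against. Your outline does capture the correct \emph{shape} of the variational route (coercivity of the Mabuchi functional, then Chen--Cheng estimates and the Berman--Darvas--Lu correspondence for existence, Berman--Berndtsson convexity for uniqueness), and your computation that the average scalar curvature $\bar S_\varepsilon \to -\nu(L)$ is correct; it is essentially Lemma \ref{lemma 2.3} of the paper.

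As a proof, however, the proposal has a genuine gap precisely at the step you yourself flag as ``the main obstacle'': the coercivity of the K-energy on $-2\pi c_1(M)+\varepsilon[\omega_0]$ for $-c_1(M)$ merely nef \emph{is} the content of the theorem, and the mechanism you offer does not deliver it. In the standard decomposition $\mathcal{M}=\operatorname{Ent}+\mathcal{J}_{-\mathrm{Ric}}$ the entropy term is coercive for soft reasons; the difficulty is entirely in bounding the $\mathcal{J}$-type energy term paired against a class that is only nef (and, in the non-big case, not known to be semi-positive). ``Running the estimates with the approximants $\theta_\sigma$ and passing to the limit with uniform constants, the sign of $\bar S_\varepsilon$ absorbing the loss'' is exactly what fails naively, because the constants in the known lower bounds for $\mathcal{J}_\chi$ degenerate as $\sigma\to 0$; no cancellation against $\bar S_\varepsilon<0$ rescues this without new input. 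The two actual proofs supply that input in substantively different ways: Dyrefelt proves a cohomological coercivity criterion valid up to the boundary of the K\"ahler cone via a delicate study of the $\mathcal{J}$-functional (a transcendental analogue of uniform J-stability), while Song establishes a Nakai--Moishezon-type criterion for the J-equation/complex Hessian equations and deduces coercivity from solvability of an auxiliary second-order equation. A secondary soft spot: your uniqueness argument should rest not on $\operatorname{Aut}_0(M)$ being a compact torus ``acting by isometries,'' but on the fact that for $K_M$ nef there are no nontrivial holomorphic vector fields with holomorphy potentials, so the reduced automorphism group is trivial and Berman--Berndtsson uniqueness modulo automorphisms already gives uniqueness of the metric itself.
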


This result was obtained recently by both Dyrefelt \cite{2012.07956} and Song \cite{2004.02832}. It was first established for minimal surfaces of general type by Arezzo-Pacard \cite{MR2275832}, then Jian-Shi-Song \cite{MR3981128} proved it for smooth minimal models with semi-ample canonical line bundle. 

We will be using the following notations: $(M,\omega_0)$ is a compact manifold of dimension $n$, with a fixed K\"{a}hler metric $\omega_0$. For any K\"{a}hler metric $\omega$, we denote its Ricci curvature by $\mathrm{Ric}(\omega)$ and its scalar curvature by $R(\omega)$, and we know that $\mathrm{Ric}(\omega) \in 2 \pi c_1(M)$. A K\"{a}hler metric $\omega$ is called cscK if $R(\omega) = \frac{2  \pi n c_1(M) \cdot [\omega]^{n-1}}{[\omega]^n} $.  Finally, we will denote the unique cscK metric in $-2 \pi c_1(M) + \varepsilon [\omega_0]$ by $\omega_{\varepsilon}$.

\section{Proof of Theorem \ref{main}}

 Recall the numerical dimension $v$ of $K_M$ is defined to be 
\begin{equation}
   v : = \max \{k = 0, \ldots, n|(-c_1(M))^k \cdot [\omega_0]^{n-k} \neq 0\},
\end{equation}
and notice that if $-c_1(M)$ is big and nef, then $v = n$. We will need the following calculation.
\begin{Lemma} \label{lemma 2.3}
\begin{equation}
     \lim_{\varepsilon \to 0} \frac{2 \pi n   c_1(M) \cdot [\omega_{\varepsilon}]^{n-1}}{[\omega_{\varepsilon}]^{n}} = -  v.
\end{equation}

\end{Lemma}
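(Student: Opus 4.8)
The plan is to note first that the expression whose limit we want is purely cohomological. Indeed, by the very definition of cscK recalled in the introduction, the left-hand side is exactly the (constant) scalar curvature $R(\omega_{\varepsilon})$, and this number depends only on the class $[\omega_{\varepsilon}] = -2\pi c_1(M) + \varepsilon[\omega_0]$. Thus the cscK property itself is irrelevant to the lemma, and the whole statement reduces to evaluating the $\varepsilon \to 0$ limit of an explicit rational function of $\varepsilon$ obtained by expanding the relevant intersection numbers.

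To carry this out I would write $a_k := (-c_1(M))^k \cdot [\omega_0]^{n-k}$ for $0 \le k \le n$; by the definition of the numerical dimension, $a_v \ne 0$ while $a_k = 0$ for every $k > v$. Substituting $[\omega_{\varepsilon}] = -2\pi c_1(M) + \varepsilon[\omega_0]$ and expanding by the binomial theorem gives
\[
  [\omega_{\varepsilon}]^n = \sum_{k=0}^n \binom{n}{k}(2\pi)^{n-k} a_{n-k}\,\varepsilon^k,
\]
\[
  2\pi n\, c_1(M)\cdot[\omega_{\varepsilon}]^{n-1} = -2\pi n \sum_{j=0}^{n-1}\binom{n-1}{j}(2\pi)^{n-1-j} a_{n-j}\,\varepsilon^j .
\]

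The next step is to isolate the lowest-order surviving term in each polynomial in $\varepsilon$. Since $a_m$ vanishes exactly when $m > v$, the coefficient of $\varepsilon^k$ in the denominator is nonzero only if $n - k \le v$, so (for $v \ge 1$) both sums begin at the power $\varepsilon^{\,n-v}$, with denominator leading coefficient $\binom{n}{v}(2\pi)^v a_v \ne 0$ and numerator leading coefficient $-n\binom{n-1}{n-v}(2\pi)^v a_v$. Dividing, the common factor $(2\pi)^v a_v\,\varepsilon^{\,n-v}$ cancels and the limit equals the binomial ratio $-n\,\binom{n-1}{n-v}\big/\binom{n}{v}$, which collapses to $-n\cdot(v/n) = -v$ after using $\binom{n-1}{n-v}=\binom{n-1}{v-1}$.

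I do not anticipate a genuine obstacle here; the points deserving attention are minor. First, the fact that $a_v \ne 0$ (immediate from the definition of $v$) is exactly what legitimizes dividing by the leading coefficient, and it is worth stressing that, in contrast to arguments passing through the Khovanskii--Teissier inequalities, no positivity of the intermediate intersection numbers $a_k$ is needed. Second, the degenerate case $v = 0$ must be checked by hand: then $a_m = 0$ for all $m \ge 1$, so the numerator is identically zero while the denominator has leading term $a_0\,\varepsilon^n = [\omega_0]^n\,\varepsilon^n \ne 0$, giving the limit $0 = -v$ as claimed.
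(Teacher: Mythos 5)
Your proof is correct and follows essentially the same route as the paper: expand both intersection numbers binomially in $\varepsilon$, observe that the lowest surviving power is $\varepsilon^{n-v}$ because $(-c_1(M))^k\cdot[\omega_0]^{n-k}=0$ for $k>v$ while the $\varepsilon^{n-v}$ coefficients are nonzero multiples of $(-c_1(M))^v\cdot[\omega_0]^{n-v}$, and conclude from the identity $n\binom{n-1}{v-1}=v\binom{n}{v}$. Your explicit check of the degenerate case $v=0$ is a minor addition that the paper's displayed formula glosses over, but the argument is otherwise identical.
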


\begin{proof}
Let $\eta$ be a representative of $- 2 \pi c_1(M)$, we have the following elementary expansions:

\begin{equation}
    [\omega_{\varepsilon}]^{n} = \int_{M} \omega_{\varepsilon}^n  = \int_{M}  (\eta + \varepsilon \omega_0)^n = \sum_{i = 0}^{n} {n \choose i} \varepsilon^{n-i} (- 2 \pi c_1(M))^{i} \cdot [\omega_0]^{n-i}
\end{equation}

and
\begin{equation}
\begin{aligned}
&2\pi c_1(M) \cdot [\omega_{\varepsilon}]^{n - 1} \\
&= -\int_{M} \eta \wedge (\eta + \varepsilon \omega_0)^{n-1} \\
&= -\sum_{i = 0}^{n-1}{n -1 \choose i} \varepsilon^{n-i-1} (- 2 \pi c_1(M))^{i +1} \cdot [\omega_0]^{n-i-1}.
\end{aligned}
\end{equation}

Then 
\begin{equation}
    \begin{aligned}
        &\frac{2 \pi n  c_1(M)\cdot [\omega_{\varepsilon}]^{n-1}}{[\omega_{\varepsilon}]^{n}} \\
        &= - \frac{n\sum_{i = 0}^{n-1}{n-1 \choose i}\varepsilon^{n-i-1} (- 2 \pi c_1(M))^{i +1} \cdot [\omega_0]^{n-i-1} }{\sum_{i = 0}^{n} {n \choose i} \varepsilon^{n-i} (- 2 \pi c_1(M))^{i} \cdot [\omega_0]^{n-i} } \\
        &= -  \frac{n{n-1 \choose v-1} \varepsilon^{n-v} (- 2\pi c_1(M))^v \cdot [\omega_0]^{n-v} + \ldots + n\varepsilon^{n-1} (-2 \pi c_1(M)) \cdot [\omega_0]^{n-1}}{{n \choose v} \varepsilon^{n-v} (- 2 \pi c_1(M))^v \cdot [\omega_0]^{n-v} + \ldots + \varepsilon^{n}  [\omega_0]^{n}}.
    \end{aligned}
\end{equation}
\end{proof}

The proof is concluded by noticing that $n {n-1 \choose v-1} = v {n \choose v}$. We are now ready to prove Theorem \ref{main} and the proof is going to be based on the following well-known key estimate.
\begin{Prop}[see for example \cite{MR3643615} Chapter 4 and \cite{MR2497488}]
For any K\"{a}hler metric $\omega$ on a compact K\"{a}hler manifold $M$, the following holds.
    \begin{equation}
\begin{aligned}
    &(2(n+1)c_2(M) - n c_1(M)^2) \cdot ([\omega])^{n-2}\\
    &= \frac{1}{4 \pi^2 n(n-1)} \int_{M} ((n+1)|\mathring{\mathrm{Rm}(\omega)}|_{\omega}^2 - (n+2)|\mathring{\mathrm{Ric}(\omega)}|_{\omega}^2)\omega^n\\
    & \geq \frac{1}{4 \pi^2n(n-1)} \int_{M}((n+1) |\mathring{\mathrm{Rm}(\omega)}|_{\omega}^2 - (n+2)|\mathrm{Ric}(\omega) + \omega|_{\omega}^2)\omega^n,
\end{aligned}
\end{equation}
where
\begin{equation}
    \begin{aligned}
    \omega &:= \sqrt{-1}g_{i \bar j}dz_{i} \wedge d\bar{z}_j,\\
    \mathring{\mathrm{Rm}(\omega)}_{i \bar j k \bar l} &:= \mathrm{Rm}(\omega)_{i \bar j k \bar l} - \frac{R(\omega)}{n(n+1)}(g_{i \bar j}g_{k \bar l} + g_{i \bar l}g_{k \bar j}),\\
    &\mathring{\mathrm{Ric}(\omega)}:= \mathrm{Ric}(\omega) - \frac{R(\omega)}{n}\omega.
    \end{aligned}
\end{equation}
\end{Prop}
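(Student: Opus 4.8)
The plan is to prove the displayed \emph{equality} by a direct Chern--Weil computation and then to deduce the \emph{inequality} by an elementary pointwise estimate on the Ricci term. Fix the Kähler metric $\omega$ and write $R$, $\mathrm{Ric}$, $\mathrm{Rm}$ for its scalar, Ricci and full curvature, $|\cdot|$ for the pointwise $\omega$-norm, and $\Lambda$ for the trace $\mathrm{tr}_\omega$. Let $\Theta$ be the curvature of the Chern connection on $(T^{1,0}M,\omega)$, so that $c_1(M)=\frac{1}{2\pi}\mathrm{Ric}(\omega)$ and, from the determinantal expansion of the total Chern form,
\[
c_1(M)^2=\Bigl(\tfrac{\sqrt{-1}}{2\pi}\Bigr)^2(\mathrm{tr}\,\Theta)^2,\qquad 2c_2(M)=\Bigl(\tfrac{\sqrt{-1}}{2\pi}\Bigr)^2\bigl((\mathrm{tr}\,\Theta)^2-\mathrm{tr}(\Theta\wedge\Theta)\bigr).
\]
Combining these writes the Miyaoka--Yau combination as a single curvature $(2,2)$-form, $2(n+1)c_2(M)-nc_1(M)^2=\bigl(\tfrac{\sqrt{-1}}{2\pi}\bigr)^2\bigl((\mathrm{tr}\,\Theta)^2-(n+1)\,\mathrm{tr}(\Theta\wedge\Theta)\bigr)$.

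Next I would record the pointwise identities that turn a $(2,2)$-form into its metric contractions. For $(1,1)$-forms $\alpha,\beta$ one has $\alpha\wedge\beta\wedge\frac{\omega^{n-2}}{(n-2)!}=\bigl((\Lambda\alpha)(\Lambda\beta)-\langle\alpha,\beta\rangle\bigr)\frac{\omega^n}{n!}$, and there is an analogous double-trace formula for $\mathrm{tr}(\Theta\wedge\Theta)$. Applying them to the two pieces above yields the standard expressions
\[
c_1(M)^2\wedge\frac{\omega^{n-2}}{(n-2)!}=\frac{R^2-|\mathrm{Ric}|^2}{4\pi^2}\,\frac{\omega^n}{n!},\qquad c_2(M)\wedge\frac{\omega^{n-2}}{(n-2)!}=\frac{R^2-2|\mathrm{Ric}|^2+|\mathrm{Rm}|^2}{8\pi^2}\,\frac{\omega^n}{n!}.
\]
Substituting into the combination, restoring the factor $\omega^{n-2}=(n-2)!\,\frac{\omega^{n-2}}{(n-2)!}$, and using $\frac{(n-2)!}{n!}=\frac{1}{n(n-1)}$ collapses the left-hand intersection number to
\[
\frac{1}{4\pi^2 n(n-1)}\int_M\bigl((n+1)|\mathrm{Rm}|^2-(n+2)|\mathrm{Ric}|^2+R^2\bigr)\,\omega^n ;
\]
the point worth checking here is that the coefficient of $R^2$ comes out exactly $(n+1)-n=1$.

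To pass to the traceless form I would decompose orthogonally. Writing $S_{i\bar j k\bar l}=\frac{R}{n(n+1)}(g_{i\bar j}g_{k\bar l}+g_{i\bar l}g_{k\bar j})$ for the scalar part subtracted in the definition of $\mathring{\mathrm{Rm}(\omega)}$, a short contraction gives $\langle\mathrm{Rm},S\rangle=|S|^2=\frac{2R^2}{n(n+1)}$, so $\mathring{\mathrm{Rm}(\omega)}=\mathrm{Rm}-S$ is $\omega$-orthogonal to $S$ and $|\mathrm{Rm}|^2=|\mathring{\mathrm{Rm}(\omega)}|^2+\frac{2R^2}{n(n+1)}$; likewise $|\mathrm{Ric}|^2=|\mathring{\mathrm{Ric}(\omega)}|^2+\frac{R^2}{n}$. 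Feeding these in, the scalar contributions cancel precisely, $(n+1)\frac{2R^2}{n(n+1)}-(n+2)\frac{R^2}{n}+R^2=0$, which is exactly why $2(n+1)c_2-nc_1^2$ is the correct combination, and the asserted equality follows. For the inequality I would argue pointwise: since $\langle\mathrm{Ric},\omega\rangle=R$ and $|\omega|^2=n$, expanding gives $|\mathrm{Ric}+\omega|^2-|\mathring{\mathrm{Ric}(\omega)}|^2=2R+n+\frac{R^2}{n}=\frac{(R+n)^2}{n}\ge 0$, so $-(n+2)|\mathring{\mathrm{Ric}(\omega)}|^2\ge -(n+2)|\mathrm{Ric}+\omega|^2$ everywhere; integrating against $\omega^n$ gives the stated lower bound.

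The main obstacle is the double-trace Chern--Weil identity for $\mathrm{tr}(\Theta\wedge\Theta)\wedge\omega^{n-2}$, namely verifying that it produces $|\mathrm{Rm}|^2$ with the correct coefficient. That is the one step demanding careful index bookkeeping, whereas the scalar-curvature cancellation and the final Ricci inequality are routine once the orthogonal decomposition is in hand.
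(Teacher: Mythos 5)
Your proposal is correct, but note that the paper itself contains no proof of this Proposition: it is quoted as a ``well-known key estimate'' with references (\cite{MR3643615} Chapter~4, \cite{MR2497488}), so what you have written is precisely the standard argument those sources contain, carried out in full rather than cited. Your coefficients all check out: the combination $2(n+1)c_2-nc_1^2=\bigl(\tfrac{\sqrt{-1}}{2\pi}\bigr)^2\bigl((\operatorname{tr}\Theta)^2-(n+1)\operatorname{tr}(\Theta\wedge\Theta)\bigr)$ is right, the two trace formulas for $c_1^2\wedge\omega^{n-2}$ and $c_2\wedge\omega^{n-2}$ are the standard ones (for $n=2$ the latter recovers the Gauss--Bonnet integrand on K\"ahler surfaces, a useful sanity check), the coefficient of $R^2$ is indeed $(n+1)-n=1$, and the scalar parts cancel exactly as you say since $(n+1)\tfrac{2}{n(n+1)}-\tfrac{n+2}{n}+1=0$. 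The one step you flag as needing careful bookkeeping also verifies: writing $\Theta^i_j$ for the curvature entries with components $R_{i\bar j k\bar l}$, the double-trace identity applied to $\operatorname{tr}(\Theta\wedge\Theta)=\sum_{i,j}\Theta^i_j\wedge\Theta^j_i$ gives
\begin{equation*}
\sum_{i,j}\Bigl[(\Lambda\Theta^i_j)(\Lambda\Theta^j_i)-\sum_{k,l}R_{i\bar j k\bar l}R_{j\bar i l\bar k}\Bigr]
=|\mathrm{Ric}(\omega)|_{\omega}^2-|\mathrm{Rm}(\omega)|_{\omega}^2,
\end{equation*}
because $\Lambda\Theta^i_j=\mathrm{Ric}(\omega)_{i\bar j}$ and the K\"ahler symmetry $R_{j\bar i l\bar k}=\overline{R_{i\bar j k\bar l}}$ turns the second sum into $|\mathrm{Rm}(\omega)|_{\omega}^2$; this produces exactly the coefficient you need for the $R^2-2|\mathrm{Ric}|^2+|\mathrm{Rm}|^2$ expression. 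Your orthogonality computations ($\langle\mathrm{Rm},S\rangle=|S|^2=\tfrac{2R^2}{n(n+1)}$, using $|g_{i\bar j}g_{k\bar l}+g_{i\bar l}g_{k\bar j}|^2=2n(n+1)$) and the final pointwise estimate $|\mathrm{Ric}(\omega)+\omega|_{\omega}^2-|\mathring{\mathrm{Ric}(\omega)}|_{\omega}^2=\tfrac{(R+n)^2}{n}\ge 0$ are both correct, and the latter is exactly the mechanism the paper exploits afterwards, since the cscK metrics $\omega_{\varepsilon}$ make $R(\omega_{\varepsilon})+n$ (respectively the collapsed-volume term) vanish in the limit.
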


According to the above estimate, in order to prove \ref{eq:MY}, we only need to find a sequence of K\"{a}hler metrics $\{\omega_i\}$ satisfying the following:
\begin{equation}
    \begin{aligned}
    \lim_{i \to \infty}[\omega_i] = - 2 \pi c_1(M), \\
    \lim_{i \to \infty} \int_{M} |\mathrm{Ric}(\omega_i) + \omega_i|^2_{\omega_i} \omega_i^n = 0.
    \end{aligned}
\end{equation}
Previous approaches by \cite{MR2497488} and \cite{1802.05425} took advantage of the K\"{a}hler Ricci flow which was known to exist all time (see \cite{Tsuji1988, tian2006kahler}) and  converges in cohomological sense to $-2 \pi c_1(M)$ when $-c_1(M)$ is nef. The point of \cite{MR2497488} is to use the fact that the scalar curvature along the K\"{a}hler Ricci flow is uniformly bounded which was shown by \cite{MR2544732} (Song-Tian \cite{MR3506382} also showed that the scalar curvature is bounded when $-c_1(M)$ is not big), but it relies on $-c_1(M)$ being big. Assuming semi-positivity and non-bigness, \cite{1802.05425} modified Zhang's \cite{MR2497488} approach using only the uniform lower bound on the scalar curvature but crucially exploiting the fact that the volume of the manifold along the flow is exponenentially collapsing. Instead of using K\"{a}hler Ricci flow we make use of the sequence of the cscK metrics in a neighborhood of the canonical class. Using Theorem \ref{existence}, we can take a sequence of cscK metrics $ \omega_{{\varepsilon}} \in - 2 \pi c_1(M) + \varepsilon [\omega_0]$ for $\varepsilon$ small enough, and we know that
\begin{equation}
    \begin{aligned}
        & \lim_{\varepsilon \to 0} [\omega_{\varepsilon}] = -  2 \pi c_1(M), \\
        & R(\omega_{\varepsilon}) = \frac{2 \pi n c_1(M) \cdot [\omega_{\varepsilon}]^{n-1}}{[\omega_{\varepsilon}]^n} \to - v \text{ as } \varepsilon \to 0.
    \end{aligned}
\end{equation}
by Lemma \ref{lemma 2.3}. Now the main point is:

\begin{enumerate}
    \item when$-c_1(M)$ is big and nef, $\lim_{\varepsilon \to 0} R(\omega_{\varepsilon}) = -n$.
    \item when $-c_1(M)$ is nef but not big, $\lim_{\varepsilon \to 0} R(\omega_{\varepsilon}) < \infty $.
\end{enumerate}
The following lemma is well-known.
\begin{Lemma}[see for example \cite{MR3186384} Chapter 4, Lemma 4.7]\label{lemma1}

For any K\"{a}hler metric $\omega$ on a compact K\"{a}hler manifold $M$, we have
\begin{equation}
    \int_{M} |\mathrm{Ric}(\omega)|^2 \omega^n = \int_{M} R(\omega)^2 \omega^n - 4 \pi^ 2n(n-1) c_1(M)^2 \cdot [\omega]^{n-2}.
\end{equation}
\end{Lemma}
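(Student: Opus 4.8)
The plan is to turn the topological intersection number $c_1(M)^2 \cdot [\omega]^{n-2}$ into a curvature integral, using that the Ricci form $\rho := \mathrm{Ric}(\omega)$ represents $2\pi c_1(M)$. Since the wedge of closed forms computes the cup product in de Rham cohomology, I would first record
\begin{equation*}
4\pi^2\, c_1(M)^2 \cdot [\omega]^{n-2} = \int_M \rho \wedge \rho \wedge \omega^{n-2},
\end{equation*}
so that the entire lemma reduces to identifying the integrand on the right pointwise.

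The heart of the matter is a pointwise linear-algebra computation. Fix $x_0 \in M$ and choose a $g$-unitary coframe $\theta^1, \dots, \theta^n$ at $x_0$ diagonalizing the Hermitian Ricci tensor, so that $\omega = \sqrt{-1}\sum_k \theta^k \wedge \bar\theta^k$ and $\rho = \sqrt{-1}\sum_k \lambda_k\, \theta^k \wedge \bar\theta^k$, where the $\lambda_k$ are the Ricci eigenvalues. Writing $e_k = \sqrt{-1}\,\theta^k \wedge \bar\theta^k$, these commute and square to zero, so $\omega^n = n!\, e_1 \wedge \cdots \wedge e_n$ and $\omega^{n-2} = (n-2)!\sum_{r<s}\prod_{k \neq r,s} e_k$. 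Since $\rho \wedge \rho = \sum_{i \neq j}\lambda_i \lambda_j\, e_i e_j$, wedging against $\omega^{n-2}$ keeps only the matching index pairs and yields
\begin{equation*}
\rho \wedge \rho \wedge \omega^{n-2} = \frac{1}{n(n-1)}\Big(\sum_{i \neq j}\lambda_i \lambda_j\Big)\,\omega^n = \frac{1}{n(n-1)}\big(R(\omega)^2 - |\mathrm{Ric}(\omega)|^2\big)\,\omega^n,
\end{equation*}
where I have used $R(\omega) = \sum_k \lambda_k$ and $|\mathrm{Ric}(\omega)|^2 = \sum_k \lambda_k^2$, together with the elementary identity $\sum_{i\neq j}\lambda_i\lambda_j = (\sum_k \lambda_k)^2 - \sum_k \lambda_k^2$.

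Integrating this pointwise identity over $M$ and combining with the cohomological formula above gives
\begin{equation*}
4\pi^2\, c_1(M)^2 \cdot [\omega]^{n-2} = \frac{1}{n(n-1)}\int_M \big(R(\omega)^2 - |\mathrm{Ric}(\omega)|^2\big)\,\omega^n,
\end{equation*}
and rearranging produces the claimed equality. I expect no serious obstacle here: the only care needed is bookkeeping, namely extracting the combinatorial constant $n(n-1)$ and the correct sign of the cross term from $\omega^{n-2}$, and confirming that the trace and norm conventions are consistent with the normalization $\mathrm{Ric}(\omega) \in 2\pi c_1(M)$. The diagonalization reduces everything to the scalar identity for the $\lambda_k$, after which the result is immediate.
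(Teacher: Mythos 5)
Your proof is correct, and it is the standard Chern--Weil argument for this identity: the paper itself gives no proof, only the citation to \cite{MR3186384}, and your pointwise diagonalization of the Ricci form in a unitary coframe, together with the combinatorial evaluation of $\rho\wedge\rho\wedge\omega^{n-2} = \tfrac{1}{n(n-1)}\bigl(R(\omega)^2 - |\mathrm{Ric}(\omega)|^2\bigr)\omega^n$, is exactly how that reference proceeds. The normalizations check out against the paper's conventions ($\mathrm{Ric}(\omega)\in 2\pi c_1(M)$, $R=\mathrm{tr}_\omega\mathrm{Ric}$, and intersection numbers computed as integrals of wedge products), so nothing further is needed.
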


Then a straightforward computation yields:
\begin{equation}
\begin{aligned}
   & \int_{M} |\mathrm{Ric}(\omega_{\varepsilon}) +  \omega_\varepsilon|^2_{ \omega_\varepsilon}  \omega_\varepsilon^n \\
   &=   \int_{M}( 2R( \omega_\varepsilon) + n + |\mathrm{Ric}( \omega_\varepsilon)|^2_{ \omega_\varepsilon} )  \omega_\varepsilon^n   \\
  &=   \Big(\int_{M}(2R( \omega_\varepsilon) + n + R(  \omega_\varepsilon)^2) \omega_\varepsilon^n\Big) -4 \pi^{2}n(n-1) (- c_1(M))^2 \cdot [ \omega_\varepsilon]^{n-2}  \\
  &= (2R( \omega_\varepsilon) + n + R( \omega_\varepsilon)^2)[ \omega_{\varepsilon}]^n - 4\pi^{2}n(n-1) (-c_1(M))^2 \cdot [ \omega_\varepsilon]^{n-2}\\
  & \to (-2v + n + v^2)(-2 \pi c_1(M))^{n} -  n (n - 1) (-2 \pi c_1(M))^{n}
\end{aligned}
\end{equation}
as $\varepsilon \to 0$, where $v$ is the numerical dimension of $K_{M}$. For the second equality we used Lemma \ref{lemma1}, and for the third equality we used the fact that $R(\omega_{\varepsilon})$ is constant with respect to the manifold. For calculating the limit we used Lemma \ref{lemma 2.3}. When $-c_1(M)$ is not big, we have $(-c_1(M))^n = 0$, thus
\begin{equation}
    (-2v + n + v^2)(-2 \pi c_1(M))^{n} -  n (n - 1) (-2 \pi c_1(M))^{n} = 0.
\end{equation}
When $(-c_1(M))^{n} > 0$, i.e. $v = n$, we get
\begin{equation}
\begin{aligned}
    &(-2v + n + v^2)(-2 \pi c_1(M))^{n} -  n (n - 1) (-2 \pi c_1(M))^{n} \\
    &= ((n^{2} - n) - n(n - 1))(-2 \pi c_1(M))^{n} = 0.
\end{aligned}
\end{equation}

We conclude this note by remarking that in previous approaches \cite{MR2497488} and \cite{1802.05425}  where K\"{a}hler Ricci flow was used, this computation was not possible since they did not have such a strong control on how the scalar curvature behaves along the flow globally. 

\section*{Acknowledgement}
The author would like to thank his advisor Ben Weinkove for his continued support, encouragement, and many valuable comments on the manuscript.

\medskip
\printbibliography
\end{document}